\newcommand{\affiliation}{\footnote}
\newcommand{\affiliationmark}[1][\value{footnote}-1]{\footnotemark[\numexpr#1+1\relax]}
\definecolor{cblue}{RGB}{0,70,140}
\definecolor{cgreen}{RGB}{100,140,0}
\definecolor{cred}{RGB}{190,10,50}
\setlist{itemsep=0ex,topsep=0ex,parsep=0.4ex}
\renewcommand*{\backref}[1]{}
\renewcommand*{\backrefalt}[4]{
	\ifcase #1 Not cited.%
	\or $\uparrow$#2%
	\else $\uparrow$#2%
	\fi%
}
\let\oldbibliography\bibliography
\renewcommand{\bibliography}[1]{
  {

    \hypersetup{linkcolor=cred}
    \bibliographystyle{bibstyle}
    \oldbibliography{#1}
  }
}
\theoremstyle{plain}
\newtheorem{theorem}{Theorem}[section]
\newtheorem{lemma}[theorem]{Lemma}
\newtheorem{corollary}[theorem]{Corollary}
\newtheorem{conjecture}[theorem]{Conjecture}
\newtheorem{problem}[theorem]{Problem}
\theoremstyle{definition}
\renewenvironment{proof}[1][\proofname]
{\par\pushQED{\qed}
	\normalfont\topsep6\p@\@plus6\p@\relax\trivlist
	\item[\hskip\labelsep\bfseries#1\@addpunct{.}]
	\ignorespaces}
{\popQED\endtrivlist\@endpefalse}
\newcommand{\pr}{\mathbb{P}}
\newcommand{\ev}{\mathbb{E}}
\newcommand{\ent}{\mathcal{H}}
\newcommand{\ind}{\mathbbm{1}}
\newcommand{\cI}{\mathcal I}
\newcommand{\cO}{\mathcal O}
\newcommand{\cL}{\mathcal L}
\newcommand{\C}{\mathcal C}
\newcommand{\X}{\mathcal X}
\newcommand{\Y}{\mathcal Y}
\DeclarePairedDelimiter{\abs}{\lvert}{\rvert}
\title{Cycle-factors of regular graphs via entropy}
\author{Micha Christoph\affiliation{Department of Mathematics, ETH Z\"{u}rich, Switzerland (\textsf{\href{mailto:micha.christoph@math.ethz.ch}{micha.christoph@math.ethz.ch}}). Research supported by SNSF Ambizione Grant No. 216071.} \and Nemanja Dragani\'{c}\affiliation{Mathematical Institute, University of Oxford, United Kingdom (\textsf{\{\href{mailto:nemanja.draganic@maths.ox.ac.uk}{nemanja.draganic},\href{mailto:antonio.girao@maths.ox.ac.uk}{antonio.girao},\href{mailto:eoin.hurley@maths.ox.ac.uk}{eoin.hurley},\allowbreak\href{mailto:lukas.michel@maths.ox.ac.uk}{lukas.michel},\href{mailto:alp.muyesser@maths.ox.ac.uk}{alp.muyesser}\}@maths.ox.ac.uk}). Research of Nemanja Dragani\'c supported by SNSF project 217926. Research of Eoin Hurley supported by ERC Advanced Grant 883810.} \and Ant\'{o}nio Gir\~{a}o\affiliationmark[2] \and Eoin Hurley\affiliationmark[2] \and Lukas Michel\affiliationmark[2] \and Alp M\"{u}yesser\affiliationmark[2]}
\date{23 August 2025}
\begin{document}

\maketitle

\begin{abstract}
  It is a classical result that a random permutation of $n$ elements has, on average, about $\log n$ cycles. We generalise this fact to all directed $d$-regular graphs on $n$ vertices by showing that, on average, a random cycle-factor of such a graph has $\cO((n\log d)/d)$ cycles. This is tight up to the constant factor and improves the best previous bound of the form $\cO({n/\!\sqrt{\log d}})$ due to Vishnoi. Our results also yield randomised polynomial-time algorithms for finding such a cycle-factor and for finding a tour of length $(1+\cO((\log d)/d)) \cdot n$ if the graph is connected. This makes progress on a conjecture of Magnant and Martin and on a problem studied by Vishnoi and by Feige, Ravi, and Singh. Our proof uses the language of entropy to exploit the fact that the upper and lower bounds on the number of perfect matchings in regular bipartite graphs are extremely close.
\end{abstract}

\section{Introduction}

The probability that a random permutation has a fixed point was considered as early as 1704 in Pierre-R\'emond de Montmort's \emph{Essai d’Analyse sur les Jeux de Hasard}. Random permutations have since become one of the most classical topics in discrete probability. Cauchy was the first to study permutations for their own sake. In the 1840s, he derived a formula for the number of permutations of $n$ elements with a given cycle structure \cite{cauchy1840exercices}. This formula implies that, on average, a random permutation contains $\sum_{k=1}^n 1/k \simeq \log n$ cycles. The latter result is now a standard exercise in discrete probability.

In this paper, we generalise this result to all directed $d$-regular graphs\footnote{A directed graph can have loops and directed cycles of length $2$, but no parallel edges. A directed graph is \emph{$d$-regular} if the in-degree and out-degree of every vertex is exactly $d$.} on $n$ vertices.  A \emph{cycle-factor} of such a graph is a partition of its vertices into directed cycles. Equivalently, this corresponds to a permutation of the vertices such that every vertex is mapped to one of its $d$ out-neighbours. In particular, a random permutation of $n$ elements is the same as a random cycle-factor of the complete directed graph on $n$ vertices with a loop at every vertex.

It follows from Hall's theorem that every directed $d$-regular graph has at least one cycle-factor. How many cycles are there in a typical cycle-factor of such a graph? By the discussion above, a random cycle-factor of the disjoint union of $n/d$ complete directed graphs on $d$ vertices has, on average, about $(n \log d) / d$ cycles. We prove that, up to constant factors, this is an upper bound on the expected number of cycles in a typical cycle-factor for any directed $d$-regular graph.

\begin{theorem}\label{thm:smallcyclefactordirected}
   The expected number of cycles in a uniformly random cycle-factor of a directed $d$-regular graph on $n$ vertices is $\cO((n\log d)/d)$.
\end{theorem}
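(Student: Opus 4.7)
My approach works with the bipartite double cover $B$ of $G$: replace each $v \in V(G)$ by an out-copy $v^+$ on the left and an in-copy $v^-$ on the right, and include an edge $\{u^+, v^-\}$ for every directed edge $uv \in E(G)$. Then $B$ is $d$-regular bipartite with parts of size $n$, and cycle-factors of $G$ correspond bijectively to perfect matchings of $B$. Write $M$ for a uniformly random perfect matching of $B$, $C_k(M)$ for the number of $k$-cycles in the associated cycle-factor, and $C(M) = \sum_k C_k(M)$.

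The guiding principle is the extreme tightness of the classical bounds on $\log\mathrm{pm}(B) = H(M)$: Br\'egman's upper bound gives $(n/d)\log(d!) = n(\log d - 1) + O((n\log d)/d)$, while Schrijver's lower bound gives $n\bigl((d-1)\log(d-1) - (d-2)\log d\bigr) = n(\log d - 1) + O(n/d)$. The gap between these two bounds is of order $O((n\log d)/d)$, precisely the target of the theorem, and the entropy language is the device that converts this closeness into a bound on $\ev[C]$.

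Concretely, I would first prove the per-edge marginal bound
\[
  \pr[e \in M] \leq \frac{1}{d}\bigl(1 + O((\log d)/d)\bigr) \qquad \text{for every } e \in E(B).
\]
Granted this, the Heilmann--Lieb theorem (real-rootedness of the matching polynomial, hence negative association for the uniform measure on perfect matchings of $B$) yields, for any directed $k$-cycle $\gamma$ in $G$,
\[
  \pr[\gamma \subseteq M] \leq \prod_{e \in \gamma} \pr[e \in M] \leq \frac{1}{d^k}\bigl(1 + O((\log d)/d)\bigr)^k.
\]
Since $G$ contains at most $nd^{k-1}/k$ directed $k$-cycles, this gives $\ev[C_k] \leq (1 + O((\log d)/d))^k \cdot n/(kd)$. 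Splitting the total at the threshold $k_0 := d/\log d$, the long-cycle tail is $\sum_{k > k_0} C_k \leq n/k_0 = (n\log d)/d$ deterministically (because total cycle length is $n$), and the short-cycle sum $\sum_{k \leq k_0} \ev[C_k]$ is $O((n\log d)/d)$ because $(1 + O((\log d)/d))^{k_0} = O(1)$ and $\sum_{k \leq k_0} 1/k = O(\log d)$.

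The main obstacle is establishing the per-edge marginal bound. The naive approach of sandwiching $\pr[e \in M] = \mathrm{pm}(B - V(e))/\mathrm{pm}(B)$ between Br\'egman above and Schrijver below picks up a multiplicative factor of $\exp(O((n\log d)/d))$ --- the \emph{global} Br\'egman--Schrijver gap --- which is fatal. The entropy viewpoint is what allows one to \emph{localise} this gap to the two vertices $V(e)$. I would attempt a random-ordering chain-rule expansion of $H(M)$ in the spirit of Radhakrishnan's entropy proof of Br\'egman: if one can show that the per-vertex Br\'egman slack is controlled globally by Schrijver and distributed roughly evenly across vertices, then charging only $O((\log d)/d)$ of slack to the endpoints of $e$ should suffice to deduce the desired per-edge bound.
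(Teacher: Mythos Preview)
Your argument has two gaps, and the first is fatal.

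\textbf{Negative association fails.} The Heilmann--Lieb theorem concerns the matching polynomial over \emph{all} matchings; the resulting strongly Rayleigh property does not survive conditioning on the matching being perfect. In fact, vertex-disjoint edges in a uniform perfect matching can be strongly \emph{positively} correlated, even when they form a directed cycle in $G$. For a concrete $3$-regular example, take two copies of $K_{3,3}$ on $\{a_1,a_2,a_3\}\cup\{b_1,b_2,b_3\}$ and $\{a_4,a_5,a_6\}\cup\{b_4,b_5,b_6\}$, delete $a_1b_1$ and $a_4b_4$, and add $a_1b_4$ and $a_4b_1$. A direct count gives $20$ perfect matchings, with $\pr[a_1b_4\in M]=\pr[a_4b_1\in M]=1/5$ but $\pr[a_1b_4,a_4b_1\in M]=1/5$ as well (the two edges force each other). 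These two edges are exactly the $2$-cycle $1\to 4\to 1$ in the corresponding directed graph, so your inequality $\pr[\gamma\subseteq M]\le\prod_{e\in\gamma}\pr[e\in M]$ is off by a factor of $5$ already for $k=2$, $d=3$. This factor can be made to grow with $n$ by taking disjoint unions, so the first-moment sum over short cycles is not controlled.

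\textbf{The per-edge bound is unproven.} You correctly identify that the naive Br\'egman/Schrijver sandwich on $\mathrm{pm}(B\setminus V(e))/\mathrm{pm}(B)$ loses the global gap $\exp(\Theta((n\log d)/d))$, and you propose to ``localise'' this via a Radhakrishnan-style argument. But no mechanism is given for why the per-vertex slack should distribute evenly; a priori it could concentrate on a few vertices, and your proposal does not rule this out. (The same $3$-regular example already has an edge with marginal $2/5>1/d$.)

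The paper's proof sidesteps both issues. Rather than bounding $\pr[\gamma\subseteq M]$ for fixed cycles $\gamma$, it reveals $\sigma(i)$ for $i$ in a uniformly random order $\tau$ and bounds, at each step, the conditional probability that $\sigma(i)$ closes a cycle. The key lemma is pointwise: if a random variable on $s$ outcomes has entropy $\log s-\ell$, then every outcome has probability at most $2/s+\ell$. Summing the losses $\ell(i,\sigma',\tau)$ over all steps gives a total $\cL$ satisfying $\ent(\sigma)=(n/d)\log(d!)-\cL$, and the Van der Waerden lower bound $\abs{\C}\ge(d/e)^n$ forces $\cL\le (n\log(ed))/d$. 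The expected number of cycles is then at most $\sum_i \ev_\tau[2/s(i,\sigma',\tau)]+\cL=\cO((n\log d)/d)$. No correlation inequality and no uniform per-edge bound is needed; the entropy loss is controlled only in aggregate.
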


This recovers the aforementioned fact that, on average, uniformly random permutations of $n$ elements have $\cO(\log n)$ cycles. The constant factor in \cref{thm:smallcyclefactordirected} can be taken to be $4$. Note that \cref{thm:smallcyclefactordirected} also applies to undirected $d$-regular graphs if we direct each edge in both directions\footnote{A cycle-factor of an undirected graph can contain individual edges, which we consider to be cycles of length $2$.}. Previously, the best upper bound known on the number of cycles in a typical cycle-factor of a $d$-regular graph was $\cO({n/\!\sqrt{\log d}})$ due to Vishnoi \cite{vishnoi2012permanent}.

While \cref{thm:smallcyclefactordirected} only proves the existence of cycle-factors with few cycles, we can also find such cycle-factors efficiently. 
This is a consequence of the fact that we can sample almost-uniform random perfect matchings of bipartite graphs in polynomial time \cite{jerrum2004polynomial}.

\begin{corollary}\label{cor:algorithmsmallcyclefactordirected}
    There exists a randomised polynomial-time algorithm that computes, with high probability, a cycle-factor of a directed $d$-regular graph on $n$ vertices with at most $\cO((n\log d)/d)$ cycles.
\end{corollary}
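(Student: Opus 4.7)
The plan is to reduce the problem to sampling a nearly uniform random perfect matching in a bipartite graph, which can be done in polynomial time by the algorithm of Jerrum, Sinclair, and Vigoda~\cite{jerrum2004polynomial}, and then to invoke \cref{thm:smallcyclefactordirected}.

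First, given a directed $d$-regular graph $D$ on vertex set $V$, I would form the bipartite \emph{double cover} $G$ with parts $A=\{a_v:v\in V\}$ and $B=\{b_v:v\in V\}$, placing an edge $a_ub_v$ whenever $uv$ is a directed edge of $D$. Then $G$ is a $d$-regular bipartite graph, and perfect matchings of $G$ are in canonical bijection with cycle-factors of $D$: the matching $\{a_v b_{\sigma(v)}:v\in V\}$ corresponds to the permutation $\sigma$ whose cycles partition $V$. In particular, a uniformly random perfect matching of $G$ corresponds to a uniformly random cycle-factor of $D$, and counting the cycles of a given cycle-factor is trivially polynomial-time.

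Next, by \cref{thm:smallcyclefactordirected}, the expected number of cycles in a uniformly random cycle-factor is at most $C(n\log d)/d$ for some absolute constant $C$, so by Markov's inequality a uniformly random cycle-factor has at most $4C(n\log d)/d$ cycles with probability at least $3/4$. Using the Jerrum--Sinclair--Vigoda sampler with total variation error $\eps=1/4$, one obtains in time polynomial in $n$ a perfect matching of $G$ drawn from a distribution within $1/4$ in total variation distance from uniform; the corresponding cycle-factor therefore has at most $4C(n\log d)/d$ cycles with probability at least $1/2$.

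Finally, to boost to high probability, I would run the sampler independently $\Theta(\log n)$ times, count the cycles of each returned cycle-factor, and output the one with the fewest cycles; the probability that all runs fail is at most $2^{-\Theta(\log n)}$. The only genuine subtlety is making sure the sampling error of JSV does not interact badly with the expectation bound, which is why I apply Markov's inequality against the exact uniform distribution first and then pay a small additive cost for the total variation distance, rather than trying to control the expectation under the approximate distribution directly.
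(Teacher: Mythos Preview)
Your proposal is correct and follows essentially the same route as the paper: build the bipartite graph whose perfect matchings correspond to cycle-factors, invoke \cref{thm:smallcyclefactordirected} together with Markov's inequality to get a constant success probability under the uniform distribution, absorb the total variation error of the Jerrum--Sinclair--Vigoda sampler additively, and boost by taking the best of $\Theta(\log n)$ independent samples. The only cosmetic differences are your specific choices of constants ($3/4$ and $\eps=1/4$), which the paper leaves implicit.
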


If we remove one edge from each cycle in a cycle-factor, we obtain a \emph{path-factor}, which is a partition of the vertices of a graph into paths. Magnant and Martin \cite{magnant2009note} conjectured that every $d$-regular graph on $n$ vertices has a path-factor with at most $n/(d+1)$ paths. This would be tight as the disjoint union of $n/(d+1)$ cliques on $d+1$ vertices requires at least $n/(d+1)$ paths in any path-factor. The conjecture of Magnant and Martin is only known to be true for $d \le 6$ \cite{magnant2009note,feige2022path}, $d = \Omega(n)$ \cite{gruslys2021cycle}, and for partitioning almost all vertices of a graph into paths \cite{montgomery2024approximate,letzter2025nearly}. Feige, Ravi, and Singh \cite{feige2014short} showed that every $d$-regular graph on $n$ vertices has a path-factor with $\cO(n/\!\sqrt{d})$ paths, which was the best previous upper bound known. Using \cref{cor:algorithmsmallcyclefactordirected}, we get the following improvement.

\begin{corollary}\label{thm:smallpathfactor}
    There exists a randomised polynomial-time algorithm that computes, with high probability, a path-factor of a $d$-regular graph on $n$ vertices with at most $\cO((n\log d)/d)$ paths.
\end{corollary}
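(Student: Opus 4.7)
The plan is to reduce directly to \cref{cor:algorithmsmallcyclefactordirected} by bidirecting the undirected graph. Given a $d$-regular undirected graph $G$ on $n$ vertices, I would form the directed graph $G'$ by replacing each undirected edge $uv$ with the two directed edges $u \to v$ and $v \to u$. Every vertex of $G'$ has in-degree and out-degree exactly $d$, so $G'$ is a directed $d$-regular graph on $n$ vertices, and \cref{cor:algorithmsmallcyclefactordirected} yields a randomised polynomial-time algorithm that, with high probability, outputs a cycle-factor $\mathcal{F}$ of $G'$ with at most $\mathcal{O}((n\log d)/d)$ directed cycles.

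Next, I would convert $\mathcal{F}$ into a path-factor of $G$ by deleting, from each directed cycle $C \in \mathcal{F}$, one arbitrary directed edge. A directed cycle of length $k$ becomes a directed path on the same $k$ vertices, and passing to the underlying undirected graph gives an undirected path in $G$ on those $k$ vertices. A length-$2$ cycle in $\mathcal{F}$ (necessarily coming from a pair of antiparallel edges over a single undirected edge $uv$) contributes a single-edge path $uv$, which is a valid path component. Since the resulting undirected paths partition $V(G)$, they form a path-factor of $G$ with exactly $|\mathcal{F}|$ components.

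Combining the two steps, the algorithm runs in randomised polynomial time and with high probability returns a path-factor of $G$ with at most $\mathcal{O}((n\log d)/d)$ paths. There is no real obstacle here beyond the two routine checks that bidirection preserves $d$-regularity and that removing one edge from each directed cycle leaves edges that belong to $G$; both are immediate.
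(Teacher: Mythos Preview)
Your proposal is correct and matches the paper's own (implicit) derivation: the paper notes that bidirecting an undirected $d$-regular graph yields a directed $d$-regular graph, applies \cref{cor:algorithmsmallcyclefactordirected}, and then observes that removing one edge from each cycle of a cycle-factor gives a path-factor. Your checks on the length-$2$ cycles and on the resulting edges lying in $G$ are exactly the routine verifications needed.
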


The conjecture of Magnant and Martin is also intimately related to the celebrated Linear Arboricity Conjecture of Akiyama, Exoo, and Harary \cite{akiyama1980covering} which states that the edges of every graph with maximum degree $\Delta$ can be partitioned into at most $\lceil (\Delta+1)/2 \rceil$ path-factors. The best bound known for the Linear Arboricity Conjecture is due to Lang and Postle \cite{lang2023improved} and shows that $\Delta/2 + \cO(\sqrt{\Delta} \log^4 \Delta)$ path-factors suffice. This bound cannot be improved to $\Delta/2 + o(\sqrt{\Delta})$ without being able to show that every $\Delta$-regular graph has a path-factor with at most $o(n/\!\sqrt{\Delta})$ paths. \Cref{thm:smallpathfactor} shows that such path-factors exist.
 
Finally, we can use our results to find short tours\footnote{A \emph{tour} of a graph is a connected walk that visits every vertex and starts and ends at the same vertex.} of connected $d$-regular graphs. The problem of finding such tours was introduced by Vishnoi \cite{vishnoi2012permanent}, motivated by the study of approximation algorithms for the travelling salesman problem. Feige, Ravi, and Singh \cite{feige2014short} showed that there exist tours of length $(1+\cO(1/\!\sqrt{d})) \cdot n$ and provided an example in which every tour has length at least $(1+\Omega(1/d)) \cdot n$. Since a cycle-factor with $c$ cycles in a connected graph can be efficiently transformed into a tour of length $n + 2 c$, we obtain the following.

\begin{corollary}\label{cor:smalltour}
    There exists a randomised polynomial-time algorithm that computes, with high probability, a tour of a connected $d$-regular graph on $n$ vertices of length at most $(1+\cO((\log d)/d)) \cdot n$.
\end{corollary}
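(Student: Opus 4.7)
The plan is to combine the algorithmic cycle-factor result from \cref{cor:algorithmsmallcyclefactordirected} with the standard patching construction hinted at in the paragraph preceding the statement. Given a connected $d$-regular graph $G$ on $n$ vertices, I first orient each edge in both directions to view $G$ as a directed $d$-regular graph, and apply \cref{cor:algorithmsmallcyclefactordirected} to obtain, in randomised polynomial time and with high probability, a cycle-factor $F$ with $c = \cO((n \log d)/d)$ cycles.

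Next, I reinterpret $F$ as a $2$-regular multigraph $H$ on $V(G)$ whose edges are the edges of $G$ underlying $F$: a directed $2$-cycle in $F$ contributes a single edge of $G$ with multiplicity $2$, while a longer directed cycle contributes an undirected cycle of the same length. Thus $H$ has $n$ edges, and its connected components are in bijection with the cycles of $F$, so there are exactly $c$ of them. Form the auxiliary \emph{component graph} $\tilde G$ whose vertices are the components of $H$ and whose edges are the pairs of components joined by some edge of $G$. Because $G$ is connected, so is $\tilde G$. Compute a spanning tree $T$ of $\tilde G$ and, for each of its $c-1$ edges, add the corresponding edge of $G$ to $H$ with multiplicity $2$. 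The resulting multigraph $H'$ has $n + 2(c-1)$ edges, has all vertex degrees even, and is connected, so by Euler's theorem it admits an Eulerian circuit. This circuit, traced in $G$, is a closed walk visiting every vertex, i.e.\ a tour, of length $n + 2(c-1) \le (1 + \cO((\log d)/d)) \cdot n$.

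Each step runs in polynomial time: \cref{cor:algorithmsmallcyclefactordirected} supplies the cycle-factor; the component graph and a spanning tree are constructed in near-linear time; and an Eulerian circuit in a connected even-degree multigraph is produced by Hierholzer's algorithm in linear time. The only probabilistic step is the invocation of \cref{cor:algorithmsmallcyclefactordirected}, so the overall algorithm succeeds with high probability. There is no serious obstacle here beyond \cref{cor:algorithmsmallcyclefactordirected} itself: doubling one edge per spanning-tree step of $\tilde G$ is exactly the standard trick that turns a cycle-factor with $c$ cycles in a connected graph into a tour of length $n + 2c$, which is presumably why the paper records this as an immediate corollary rather than as a separate theorem.
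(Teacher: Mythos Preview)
Your proposal is correct and matches the paper's approach exactly: the paper does not give a separate proof but simply records, in the sentence before the statement, that ``a cycle-factor with $c$ cycles in a connected graph can be efficiently transformed into a tour of length $n + 2c$,'' and you have spelled out precisely this standard spanning-tree-plus-Euler-circuit construction.
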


The rest of the paper is structured as follows. In \cref{sec:expcyclesincyclefactor}, we prove our main results, \cref{thm:smallcyclefactordirected,cor:algorithmsmallcyclefactordirected}. This section also includes an overview of our proof strategy. Since the proof of \cref{thm:smallcyclefactordirected} uses the language of entropy, we first provide a short introduction to entropy in \cref{sec:prelims}. We also use this section to introduce a small amount of notation. We finish in \cref{sec:openproblems} with some open problems.

\textbf{Note.} \emph{In subsequent work \cite{christoph2025lineararboricity}, we used a completely different approach to prove that every $d$-regular graph on $n$ vertices has a path-factor with $\cO(n/d)$ paths and a tour of length $(1 + \cO(1/d)) \cdot n$ if the graph is connected. We also used this approach to make progress on the Linear Arboricity Conjecture. However, this approach does not provide efficient algorithms and does not apply to directed graphs or cycle-factors.}

\section{Preliminaries}\label{sec:prelims}

Let $[n] \coloneqq \{1, \dots, n\}$, and let $S_n$ denote the family of permutations of $[n]$. If $\sigma \in S_n$ and $\cI \subseteq [n]$, we write $\sigma(\cI) = \{\sigma(i) : i \in \cI\}$. A cycle of $\sigma \in S_n$ is a minimal non-empty subset $\cI \subseteq [n]$ such that $\sigma(\cI) = \cI$. We denote the number of cycles of $\sigma$ by $\abs{\sigma}$.

For our proofs, we will need Stirling's approximation which states that as $n \to \infty$, we have $n! = (1+o(1)) \cdot \sqrt{2 \pi n} (n/e)^n$. In fact, it will be more useful for us to use the weaker bounds $(n/e)^n \le n! \le e n (n/e)^n$ which hold for all $n$.

We now recall some basic definitions and properties of Shannon entropy which will be essential for our proofs. For a more thorough exposition, we refer the reader to \cite[Chapter 37]{aigner1999proofs}. This also contains a nice exposition of Radhakrishnan's proof \cite{radhakrishnan1997entropy} of the Br\'egman-Minc inequality (see \cref{thm:bregmanmincbipperfmat}), which serves as the starting point of our approach. In the following, all logarithms are base $2$, and we use the convention $p \log p = 0$ for $p = 0$.

Suppose that $X$ is a random variable that takes values in a finite set $\X$. Then, the \emph{Shannon entropy} of $X$ is defined as
\[
    \ent(X) \coloneqq - \sum_{x \in \X} \pr(X = x) \log(\pr(X = x)).
\]
One basic property of entropy is that $\ent(X) \le \log \abs{\X}$ with equality if and only if $X$ is distributed uniformly at random in $\X$. For $p \in [0, 1]$, write $\ent(p) \coloneqq - p \log p - (1-p) \log(1-p)$ for the entropy of a Bernoulli random variable with probability $p$.

The definition of entropy easily extends to conditional probability. If $E$ is some event, then the entropy of $X$ conditioned on $E$ occurring is
\[
    \ent(X \mid E) \coloneqq - \sum_{x \in \X} \pr(X = x \mid E) \log(\pr(X = x \mid E)).
\]
If $Y$ is another random variable that takes values in a finite set $\Y$, then the \emph{conditional entropy} of $X$ given $Y$ is
\[
    \ent(X \mid Y) \coloneqq \sum_{y \in \Y} \pr(Y = y) \cdot \ent(X \mid Y = y).
\]
Note that $\ent(X \mid Y) = 0$ if and only if $Y$ completely determines $X$.

The only fact about conditional entropy that we will need is the \emph{chain rule} which says that if $X_1, \dots, X_n$ are random variables, then
\[
    \ent(X_1, \dots, X_n) = \ent(X_1) + \ent(X_2 \mid X_1) + \dots + \ent(X_n \mid X_1, \dots, X_{n-1})
\]
where the tuple $X_1, \dots, X_i$ is viewed as a single random variable whose distribution is the joint distribution of $X_1, \dots, X_i$.

\section{Expected number of cycles in a random cycle-factor}
\label{sec:expcyclesincyclefactor}

Suppose that $G$ is a directed $d$-regular graph on $n$ vertices. We wish to show that the expected number of cycles of a uniformly random cycle-factor of $G$ is $\cO((n\log d)/d)$. To achieve this, we will reveal the edges of the random cycle-factor one by one in a random order and bound the probability that each new revealed edge closes a cycle. We do this by going through the vertices in a random order and revealing their out-neighbours in the cycle-factor. If the revealed out-neighbour of a vertex were always uniformly distributed among all remaining available out-neighbours, then we could show the desired bound. Indeed, since the edges are revealed in a random order, it is possible to show that the number $t$ of remaining out-neighbours of a vertex is uniformly distributed in $[d]$, and so the probability that each new revealed edge closes a cycle would be at most
\[
    \frac{1}{d} \sum_{t = 1}^d \frac{1}{t} = \cO\left(\frac{\log d}{d}\right).
\]

To handle the general case where out-neighbours are not uniformly distributed, we connect our process to Radhakrishnan's entropy proof \cite{radhakrishnan1997entropy} of the Br\`{e}gman-Minc inequality \cite{bregman1973some}. Towards that end, construct a $d$-regular bipartite graph $H$ with bipartition $V(H) = U \cup V$ where $U$ and $V$ are both copies of $V(G)$, and for each directed edge $(u,v)$ of $G$ we add an edge between $u \in U$ and $v \in V$ to $H$. It is easy to see that cycle-factors of $G$ correspond to perfect matchings of $H$\footnote{This observation was also used by Vishnoi in \cite{vishnoi2012permanent} to show that a typical cycle-factor in a $d$-regular graph has $\cO(n/\sqrt{\log d})$ cycles.}. The Br\`{e}gman-Minc inequality gives the following upper bound on the number of perfect matchings in regular bipartite graphs.

\begin{theorem}\label{thm:bregmanmincbipperfmat}
    Every $d$-regular bipartite graph on $2 n$ vertices has at most $(d!)^{n/d}$ perfect matchings.
\end{theorem}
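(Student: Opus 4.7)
The plan is to adapt Radhakrishnan's entropy proof of the Br\'egman--Minc inequality. Let $H$ be a $d$-regular bipartite graph with parts $U, V$ of size $n$, and let $N$ denote the number of its perfect matchings. Pick a perfect matching $M$ uniformly at random, viewed as a bijection $M \colon V \to U$, so that $\ent(M) = \log N$. The goal is then to prove $\ent(M) \leq (n/d) \log(d!)$.

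First, I would independently sample a uniformly random total order $\pi$ of $V$ and imagine revealing the matches $M(v)$ in the order prescribed by $\pi$. Because $M$ and $\pi$ are independent, $\ent(M) = \ent(M \mid \pi)$, so the chain rule (applied conditionally on $\pi$) yields
\[
    \ent(M) = \sum_{v \in V} \ent\bigl(M(v) \bigm| \pi, (M(v'))_{v' <_\pi v}\bigr).
\]
For every realisation of $\pi$ and of the history $(M(v'))_{v' <_\pi v}$, the value $M(v)$ is forced to lie in the set of $U$-neighbours of $v$ that have not yet been used; write $T_v = T_v(M, \pi)$ for the size of this set. Since the entropy of a random variable is at most the logarithm of its support size,
\[
    \log N = \ent(M) \leq \ev\biggl[\sum_{v \in V} \log T_v\biggr].
\]

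The combinatorial heart of the argument is the claim that, for every fixed matching $M$ and every fixed $v \in V$, the value $T_v(M, \pi)$ is uniformly distributed on $[d]$ when $\pi$ is uniformly random. Indeed, $T_v$ counts those $U$-neighbours $u$ of $v$ whose $M$-partner $M^{-1}(u)$ appears at or after $v$ in $\pi$. The set $S_v \coloneqq \{M^{-1}(u) : u \in N_H(v)\} \subseteq V$ has size exactly $d$ (because $M$ is a bijection and $\abs{N_H(v)} = d$), and it contains $v$ itself because $M(v) \in N_H(v)$. The rank of $v$ within $S_v$ under a uniform order on $V$ is uniform on $[d]$, which forces $T_v$ to be uniform on $[d]$ as well. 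Consequently $\ev_\pi[\log T_v] = (1/d)\sum_{t=1}^d \log t = (1/d) \log(d!)$, and summing over the $n$ vertices of $V$ gives $\log N \leq (n/d) \log(d!)$, i.e.\ $N \leq (d!)^{n/d}$.

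The most subtle point is choosing the right object to order: using a uniform random order on $V$ (rather than on $U$ or on the edges) is what makes the set $S_v$ always have size exactly $d$ and always contain $v$, which in turn forces the distribution of $T_v$ to be uniform on $[d]$ \emph{independently} of $M$. That decoupling is exactly what lets the outer average over $M$ drop out and produces the clean bound $(d!)^{n/d}$ for every $d$-regular bipartite graph.
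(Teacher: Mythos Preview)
Your proof is correct and is precisely Radhakrishnan's entropy argument that the paper cites and sketches for \cref{thm:bregmanmincbipperfmat}; the paper does not give a standalone proof of this theorem but embeds the identical computation (random order, chain rule, and the observation that $s(i,\sigma',\tau)$ is uniform on $[d]$) inside the proof of \cref{thm:smallcyclefactordirected}, arriving at $\ent(\sigma)=\frac{n}{d}\log(d!)-\cL$ with $\cL\ge 0$. So your approach coincides with the paper's.
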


On a high level, Radhakrishnan's proof of \Cref{thm:bregmanmincbipperfmat} picks a uniformly random perfect matching and shows that its entropy is at most $\log((d!)^{n/d})$. The idea here is to reveal the edges of the random matching one by one in a random order\footnote{To the best of our knowledge, the random order was first used by Alon and Spencer \cite{alon2016probabilistic} to prove the Br\`{e}gman-Minc inequality.} and to bound the amount of entropy added by each new revealed edge. Note that the amount of entropy added is maximised if each new revealed edge is uniformly distributed among its remaining options. This can be used to prove \cref{thm:bregmanmincbipperfmat}.

We pursue a very similar strategy to analyse random cycle-factors of $G$. Conveniently, revealing an edge of a random perfect matching of the auxiliary bipartite graph $H$ corresponds precisely to revealing an edge of the random cycle-factor of $G$. As mentioned before, we wish to understand the case where a revealed edge of the cycle-factor is not uniformly distributed among its remaining options. If this happens, the amount of entropy added for the corresponding edge in $H$ is lower than the entropy of the uniform distribution. This, in turn, translates into a better upper bound on the number of perfect matchings of $H$. However, this upper bound cannot drop too low. This is because the Van der Waerden inequality, proved by Egorychev \cite{egorychev1981solution} and Falikman \cite{falikman1981proof}, gives the following lower bound on the number of perfect matchings in regular bipartite graphs.

\begin{theorem}\label{thm:vanderwaerdenbipperfmat}
    Every $d$-regular bipartite graph on $2 n$ vertices has at least $n! \cdot d^n / n^n$ perfect matchings.
\end{theorem}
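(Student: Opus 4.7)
The plan is to deduce the theorem from the classical Van der Waerden permanent inequality, which states that every $n \times n$ doubly stochastic matrix $A$ satisfies $\operatorname{perm}(A) \geq n!/n^n$. Let $B$ be the $n \times n$ biadjacency matrix of the given $d$-regular bipartite graph. The number of perfect matchings is precisely $\operatorname{perm}(B)$, since each nonzero term in the permanent expansion corresponds to a perfect matching. Because every row and column of $B$ sums to $d$, the rescaled matrix $A = B/d$ is doubly stochastic, and, since the permanent is $n$-homogeneous in the matrix entries, we obtain
\[
    \operatorname{perm}(B) = d^n \operatorname{perm}(A) \geq d^n \cdot \frac{n!}{n^n},
\]
which is exactly the claimed lower bound. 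This reduction is routine, so essentially all of the difficulty lies in establishing the Van der Waerden inequality itself.

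My preferred route to Van der Waerden would be Gurvits's capacity argument, which is short and avoids the convex analysis of the original Egorychev and Falikman proofs. Given a polynomial $p$ in $n$ variables with nonnegative coefficients, define its capacity by $\operatorname{cap}(p) = \inf_{x > 0} p(x)/(x_1 \cdots x_n)$, and associate to any nonnegative matrix $A$ the product polynomial $p_A(x) = \prod_{i=1}^n \sum_j A_{ij} x_j$, whose $x_1 \cdots x_n$-coefficient is precisely $\operatorname{perm}(A)$. An AM-GM computation shows that when $A$ is doubly stochastic one has $\operatorname{cap}(p_A) \geq 1$. Gurvits's main inequality is that for any real stable homogeneous polynomial $p$ of degree $n$ in $n$ variables with nonnegative coefficients, the coefficient of $x_1 \cdots x_n$ in $p$ is at least $(n!/n^n) \cdot \operatorname{cap}(p)$. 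Since any product of linear forms with nonnegative coefficients is real stable, applying this to $p_A$ yields $\operatorname{perm}(A) \geq n!/n^n$, as required.

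The heart of Gurvits's inequality is an inductive deflation step: replacing $p(x_1, \dots, x_n)$ by the polynomial $q(x_1, \dots, x_{n-1}) = (\partial p/\partial x_n)(x_1, \dots, x_{n-1}, 0)$ preserves real stability by a Hurwitz limiting argument and decreases capacity by a factor of at most $((n-1)/n)^{n-1}$; the product of these factors from $n$ down to $2$ telescopes to $n!/n^n$. The main obstacle is proving the one-step capacity bound, which reduces to a delicate one-variable optimisation exploiting the fact that the univariate restrictions of a real stable polynomial have only real roots. Alternatively, one could follow Egorychev and Falikman and show via the Alexandrov-Fenchel mixed discriminant inequality that the permanent, viewed as a function on doubly stochastic matrices, attains its minimum only at the uniform matrix $J/n$; this is considerably more technical but does not require any theory of stable polynomials.
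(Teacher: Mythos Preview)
The paper does not prove this theorem at all: it is stated with attribution to Egorychev and Falikman and then used as a black box in the proof of \cref{thm:smallcyclefactordirected}. Your reduction from perfect-matching counts to the permanent of the biadjacency matrix, followed by rescaling to a doubly stochastic matrix and invoking the Van der Waerden inequality, is exactly the standard derivation, and your sketch of Gurvits's capacity proof is a correct and by now well-known route to the inequality itself. So your proposal is correct and in fact supplies strictly more than the paper does; there is simply no ``paper's own proof'' to compare against here.
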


This lower bound allows us to quantify how much entropy can be lost while revealing the edges of the random cycle-factor of $G$. We will see that the total loss in entropy is at most $\cO((n\log d)/d)$ compared to what the entropy would have been had the edges been uniformly distributed throughout the process. The key fact that ties everything together is that any loss in entropy increases the probability of closing a cycle at most by an additive linear term. Together, this implies \cref{thm:smallcyclefactordirected}.

We start by proving the following simple lemma that formalises the relationship between a loss in entropy and an increase in the probability of a random variable attaining a specific value. In our case, this will correspond to the probability of closing a cycle.

\begin{lemma}\label{lem:entlossskew}
    Let $X$ be a random variable taking values in a set $\X$ of size $s$, and let $\ell \coloneqq \log s - \ent(X)$. Then, for all $x \in \X$ it holds that
    \[
        \pr(X = x) \le \frac{2}{s} + \ell.
    \]
\end{lemma}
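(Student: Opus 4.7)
The bound is vacuous when $p_x \le 2/s$, so my plan is to focus on the regime $p_x > 2/s$ and show that the entropy deficit $\ell$ is large enough to account for $p_x - 2/s$. The first step is to coarsen $X$ to the indicator $Y \coloneqq \ind[X = x]$ and apply the chain rule $\ent(X) = \ent(Y) + \ent(X \mid Y)$. The first summand equals the binary entropy $\ent(p_x)$. For the conditional term, conditioning on $Y = 1$ determines $X$, so it reduces to $(1 - p_x) \cdot \ent(X \mid X \ne x)$, which is at most $(1 - p_x) \log(s - 1)$ since conditional on $X \ne x$ the variable $X$ takes values in a set of size $s - 1$. Rearranging gives $\ell \ge \log s - \ent(p_x) - (1 - p_x) \log(s - 1)$.

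The remaining work is a one-variable calculus exercise: verify that $\log s - \ent(p) - (1-p)\log(s-1) \ge p - 2/s$ for $p \in [0, 1]$. Setting $f(p) \coloneqq \ent(p) + (1-p) \log(s-1) + p$, I would differentiate to find that $f$ has a unique interior critical point at $p^{\ast} = 2/(s+1)$, the solution of $p(s-1)/(1-p) = 2$. Since $\ent(\cdot)$ is concave, so is $f$, and this critical point is a maximum. A short computation evaluates $f(p^{\ast})$ to $\log(s+1)$, so the desired inequality reduces to $\log(1 + 1/s) \le 2/s$, which follows from the elementary bound $\ln(1 + x) \le x$.

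I do not anticipate a substantial obstacle: the whole argument pivots on one modelling decision. The crucial step is the coarsening to $\ind[X = x]$, which isolates the contribution of $p_x$ to the total entropy and turns the problem into a clean single-variable inequality. The mild slack in the constant (writing $2/s$ rather than the tighter $1/(s \ln 2)$ one could extract) is precisely what makes the final estimate collapse to $\ln(1+x) \le x$ with room to spare.
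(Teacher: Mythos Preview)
Your proposal is correct and follows essentially the same approach as the paper: both coarsen to the indicator $\ind[X=x]$, apply the chain rule to obtain $\ent(X) \le \ent(p) + (1-p)\log(s-1)$, and then analyse the resulting one-variable function. The only difference is cosmetic—in the final calculus step the paper bounds the derivative of $h(q) \coloneqq \ent(q) + (1-q)\log(s-1)$ by $-1$ for $q \ge 2/s$ and uses $h(2/s) \le \log s$, whereas you locate the maximiser of $h(p)+p$ at $p^\ast = 2/(s+1)$ and evaluate there; both routes are equally short.
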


\begin{proof}
    Let $x \in \X$ and let $p \coloneqq \pr(X = x)$. Consider the indicator random variable $\ind_{X = x}$ of the event that $X = x$. By the chain rule for entropy,
    \[
        \ent(X) = \ent(\ind_{X = x}) + \ent(X \mid \ind_{X = x}) = \ent(p) + p \cdot \ent(X \mid X = x) + (1-p) \cdot \ent(X \mid X \neq x).
    \]
    Clearly, $\ent(X \mid X = x) = 0$. Also, if $X \neq x$, then $X$ takes at most $s-1$ distinct values, and so $\ent(X \mid X \neq x) \le \log(s-1)$. This implies that
    \[
        \ent(X) \le \ent(p) + (1-p) \log(s-1).
    \]
    Let $h(q) \coloneqq \ent(q) + (1-q) \log(s-1)$. Note that $h(q) = \ent(Y)$ for the random variable $Y$ with $\pr(Y = x) = q$ and $\pr(Y = y) = (1-q)/(s-1)$ for all $y \in \X \setminus \{x\}$, and so $h(q) \le \log s$. Moreover,
    \[
        h'(q) = - \log q + \log(1-q) - \log(s-1) = \log\left(\frac{1-q}{q (s-1)}\right).
    \]
    Since this is decreasing in $q$, we get for $q \ge 2/s$ that
    \[
        h'(q) \le h'\left(\frac{2}{s}\right) = \log\left(\frac{s-2}{2 (s-1)}\right) \le \log\left(\frac{1}{2}\right) = -1.
    \]
    So, for $q \ge 2/s$, we have $h(q) \le h(2/s) - (q - 2/s) \le \log s - (q - 2/s)$. Since we know that $h(p) \ge \ent(X) = \log s - \ell$, it follows that $p \le 2/s + \ell$.
\end{proof}

We can now prove our main result using the strategy outlined earlier in this section. The first part of the proof closely follows Radhakrishnan's proof of the Br\`{e}gman-Minc inequality, with the twist being that we keep track of the total loss in entropy. This is used in the second part to prove an upper bound on the expected number of cycles, by passing through \cref{lem:entlossskew}.

\begin{proof}[Proof of \cref{thm:smallcyclefactordirected}]
    We identify the vertices of $G$ with $[n]$. Let $\C$ be the set of all permutations $\sigma$ of $[n]$ such that $(i, \sigma(i)) \in E(G)$ for all $i \in [n]$. Note that $\C$ corresponds to the collection of all cycle-factors of $G$. \Cref{thm:vanderwaerdenbipperfmat} in combination with Stirling's approximation implies that $\abs{\C} \ge n! \cdot d^n / n^n \ge (d/e)^n$.

    Pick $\sigma \in \C$ uniformly at random, so $\ent(\sigma) = \ent(\sigma(1), \dots, \sigma(n)) = \log \abs{\C}$. For every permutation $\tau \in S_n$, the chain rule for entropy implies that
    \[
        \ent(\sigma) = \sum_{j=1}^n \ent(\sigma(\tau(j)) \mid \sigma(\tau([j-1]))).
    \]
    For $i \in [n]$, let $\tau_{<i} \coloneqq \tau([j-1])$ where $j = \tau^{-1}(i)$. If we reorder the above sum, we get
    \[
        \ent(\sigma) = \sum_{i=1}^n \ent(\sigma(i) \mid \sigma(\tau_{<i})).
    \]
    Expanding the definition of conditional entropy, we get
    \[
        \ent(\sigma) = \sum_{i=1}^n \sum_\pi \pr(\sigma(\tau_{<i}) = \pi) \cdot \ent(\sigma(i) \mid \sigma(\tau_{<i}) = \pi),
    \]
    where $\pi$ ranges over all possible values of $\sigma(\tau_{<i})$. Since $\sigma$ is chosen uniformly at random, observe that $\pr(\sigma(\tau_{<i}) = \pi) = \abs{\{\sigma' \in \C : \sigma'(\tau_{<i}) = \pi\}} / \abs{\C}$. Therefore,
    \[
        \ent(\sigma) = \sum_{i=1}^n \frac{1}{\abs{\C}} \sum_{\sigma' \in \C} \ent(\sigma(i) \mid \sigma(\tau_{<i}) = \sigma'(\tau_{<i})).
    \]
    
    For all $i \in [n]$, $\sigma' \in \C$, and $\tau \in S_n$, let $s(i,\sigma',\tau) \coloneqq \abs{N^+(i) \setminus \sigma'(\tau_{<i})}$ be the number of out-neighbours of $i$ in $G$ that are not contained in $\sigma'(\tau_{<i})$. If $\sigma(\tau_{<i}) = \sigma'(\tau_{<i})$, then $\sigma(i)$ must be one of these out-neighbours, and so $\ent(\sigma(i) \mid \sigma(\tau_{<i}) = \sigma'(\tau_{<i})) \le \log s(i,\sigma',\tau)$. Define $\ell(i,\sigma',\tau) \coloneqq \log s(i,\sigma',\tau) - \ent(\sigma(i) \mid \sigma(\tau_{<i}) = \sigma'(\tau_{<i}))$. Then,
    \[
        \ent(\sigma) = \sum_{i=1}^n \frac{1}{\abs{\C}} \sum_{\sigma' \in \C} (\log s(i,\sigma',\tau) - \ell(i,\sigma',\tau)).
    \]
    Since this holds for all $\tau \in S_n$, we may pick $\tau \in S_n$ uniformly at random. This yields
    \[
        \ent(\sigma) = \ev_\tau\left(\sum_{i=1}^n \frac{1}{\abs{\C}} \sum_{\sigma' \in \C} (\log s(i,\sigma',\tau) - \ell(i,\sigma',\tau))\right) = \sum_{i=1}^n \frac{1}{\abs{\C}} \sum_{\sigma' \in \C} \ev_\tau(\log s(i,\sigma',\tau)) - \cL
    \]
    where
    \[
        \cL \coloneqq \sum_{i=1}^n \frac{1}{\abs{\C}} \sum_{\sigma' \in \C} \ev_\tau(\ell(i,\sigma',\tau))
    \]
    denotes the loss in entropy compared to a uniform distribution.
    
    For a fixed $i$ and $\sigma'$, note that $s(i,\sigma',\tau)$ is the number of out-neighbours $j \in N^+(i)$ of $i$ in $G$ such that $(\sigma')^{-1}(j)$ does not appear before $i$ in $\tau$. Since $\tau$ is a uniformly random permutation, this number is distributed uniformly at random in $[d]$, and so it follows that
    \[
        \ent(\sigma) = \sum_{i=1}^n \frac{1}{\abs{\C}} \sum_{\sigma' \in \C} \frac{1}{d} \sum_{t=1}^d \log t - \cL = \frac{n}{d} \log(d!) - \cL.
    \]
    Using Stirling's approximation and recalling that $\abs{\C} \ge (d/e)^n$, we get
    \[
        n \log\left(\frac{d}{e}\right) \le \log\abs{\C} = \ent(\sigma) = \frac{n}{d} \log(d!) - \cL \le n \log\left(\frac{d}{e}\right) + \frac{n}{d} \log (ed) - \cL,
    \]
    and therefore $\cL \le (n \log (ed)) / d$.

    We now bound the number of cycles $\abs{\sigma}$ of $\sigma$ using a very similar calculation. For every permutation $\tau \in S_n$, we say that $\sigma(i)$ closes a cycle if $\tau_{<i}$ contains all other vertices that are on the same cycle as $i$ in $\sigma$. That is, adding $\sigma(i)$ to $\sigma(\tau_{<i})$ creates one additional cycle. Note that $\abs{\sigma}$ is equal to the number of vertices $i \in [n]$ for which $\sigma(i)$ closes a cycle, and so
    \begin{align*}
        \ev_\sigma(\abs{\sigma}) & = \sum_{i = 1}^n \pr(\sigma(i) \text{ closes a cycle}) \\
        & = \sum_{i=1}^n \sum_\pi \pr(\sigma(\tau_{<i}) = \pi) \cdot \pr(\sigma(i) \text{ closes a cycle} \mid \sigma(\tau_{<i}) = \pi) \\
        & = \sum_{i=1}^n \frac{1}{\abs{\C}} \sum_{\sigma' \in \C} \pr(\sigma(i) \text{ closes a cycle} \mid \sigma(\tau_{<i}) = \sigma'(\tau_{<i})).
    \end{align*}
    If $\sigma(\tau_{<i}) = \sigma'(\tau_{<i})$, then either $\sigma(i)$ cannot close a cycle, or $\sigma(i)$ needs to be one specific out-neighbour $j \in N^+(i) \setminus \sigma'(\tau_{<i})$ in order to close a cycle. So, by \cref{lem:entlossskew} we know that
    \begin{align*}
        \pr(\sigma(i) \text{ closes a cycle} \mid \sigma(\tau_{<i}) = \sigma'(\tau_{<i})) & \le \max_{j \in N^+(i)} \pr(\sigma(i) = j \mid \sigma(\tau_{<i}) = \sigma'(\tau_{<i})) \\
        & \le \frac{2}{s(i,\sigma',\tau)} + \ell(i,\sigma',\tau).
    \end{align*}
    This implies that
    \[
        \ev_\sigma(\abs{\sigma}) \le \sum_{i=1}^n \frac{1}{\abs{\C}} \sum_{\sigma' \in \C} \left(\frac{2}{s(i,\sigma',\tau)} + \ell(i,\sigma',\tau)\right).
    \]
    Since this holds for all $\tau \in S_n$, we may again pick $\tau \in S_n$ uniformly at random. As noted before, $s(i,\sigma',\tau)$ is then distributed uniformly at random in $[d]$, and so
    \begin{align*}
        \ev_\sigma(\abs{\sigma}) & \le \ev_\tau\left(\sum_{i=1}^n \frac{1}{\abs{\C}} \sum_{\sigma' \in \C} \left(\frac{2}{s(i,\sigma',\tau)} + \ell(i,\sigma',\tau)\right)\right) \\
        & = \sum_{i=1}^n \frac{1}{\abs{\C}} \sum_{\sigma' \in \C} \ev_\tau\left(\frac{2}{s(i,\sigma',\tau)}\right) + \cL \\
        & = \sum_{i=1}^n \frac{1}{\abs{\C}} \sum_{\sigma' \in \C} \frac{1}{d} \sum_{t = 1}^d\frac{2}{t} + \cL \\
        & \le 2 \cdot \frac{n}{d} (\log d + 1) + \cL \le 4 \cdot \frac{n}{d} (\log d + 1). \qedhere
    \end{align*}
\end{proof}

We finish by showing that there is a randomised polynomial-time algorithm that finds a cycle-factor of $G$ with at most $\cO((n\log d)/d)$ cycles with high probability. For this, we will use the fact that there exists an algorithm that samples an almost-uniform\footnote{If $X$ is a random variable that takes values in a finite set $\X$, then $X$ is \emph{almost-uniform} with error $\delta$ if the total variation distance between $X$ and the uniform distribution is at most $\delta$, that is $\sum_{x \in \X} \abs{1/\abs{\X} - \pr(X = x)} \le \delta$.} random perfect matching in bipartite graphs, which was proved by Jerrum, Sinclair, and Vigoda \cite{jerrum2004polynomial}.

\begin{theorem}\label{thm:algorithmalmostuniformperfectmatching}
    There exists a randomised polynomial-time algorithm that samples an almost-uniform random perfect matching of a bipartite graph.
\end{theorem}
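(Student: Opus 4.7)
The plan is to design a rapidly mixing Markov chain on matchings and to sample from it, then argue that the stationary distribution can be pushed arbitrarily close to uniform on perfect matchings. Enlarge the state space from the set $\mathcal{M}$ of perfect matchings to $\mathcal{M} \cup \mathcal{M}'$, where $\mathcal{M}'$ is the set of near-perfect matchings that miss exactly one vertex from each side of the bipartition. Define a reversible Markov chain on this enlarged state space whose transitions, from a state $M$, pick an edge $uv$ uniformly at random and perform one of three moves: if $M$ is perfect and $uv \in M$, remove $uv$; if $M$ is near-perfect and both $u, v$ are unmatched, add $uv$; if $u$ is unmatched in $M$ and $v$ is matched to some $w$, swap so that $uv$ replaces $vw$ (and symmetrically). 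These moves keep the chain inside $\mathcal{M} \cup \mathcal{M}'$ and make it irreducible.

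To obtain an almost-uniform sample on $\mathcal{M}$, introduce positive weights $\lambda(M)$ on the states and run a Metropolis-type chain whose stationary distribution is proportional to $\lambda$. The first main ingredient is to bound the mixing time polynomially in $n$ and $\log(1/\delta)$, which I would do via the canonical paths method: for every pair of states $X, Y$, define a canonical path through the state graph obtained by walking along the alternating cycles and paths of $X \oplus Y$, and bound the congestion through any transition using an encoding argument. This gives a polynomial bound on the spectral gap and hence on the mixing time.

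The second main ingredient, and the main obstacle, is that in general bipartite graphs $|\mathcal{M}'|$ can be exponentially larger than $|\mathcal{M}|$, so sampling with unit weights would almost never produce a perfect matching. The fix is simulated annealing: start with a weight function for which the ratios between states of $\mathcal{M}'$ and $\mathcal{M}$ can be estimated easily, then slowly adjust the weights in polynomially many stages. At each stage, draw enough samples from the (approximate) current stationary distribution to estimate the next weight update accurately, and argue by induction that throughout the process the chain stays close to its stationary distribution and all weight ratios remain polynomially bounded. After the final stage, the stationary distribution restricted to $\mathcal{M}$ is within total variation distance $\delta$ of the uniform distribution, and conditioning on producing a perfect matching succeeds with inverse-polynomial probability, so a polynomial number of repetitions yields an almost-uniform perfect matching with the desired error. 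The delicate part is the joint control of mixing error and weight-estimation error across all annealing stages, which is exactly the technical heart of the Jerrum--Sinclair--Vigoda argument.
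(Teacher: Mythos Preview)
The paper does not prove this theorem at all: it is quoted as a black box from Jerrum, Sinclair, and Vigoda~\cite{jerrum2004polynomial} and then applied to derive \cref{cor:algorithmsmallcyclefactordirected}. So there is no ``paper's own proof'' to compare against.

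Your sketch is a reasonable high-level summary of the actual Jerrum--Sinclair--Vigoda argument. You have the right state space (perfect and near-perfect matchings), the right transitions, the canonical-paths bound on mixing, and you correctly identify the central obstacle --- that near-perfect matchings can outnumber perfect ones exponentially --- together with the annealing-style fix via carefully adjusted weights. One point where your description is slightly imprecise: in JSV the weights are attached to \emph{hole patterns} $(u,v)$ (the pair of unmatched vertices in a near-perfect matching), with the ideal weight being the ratio of perfect matchings to near-perfect matchings with that particular hole; the annealing then proceeds by interpolating edge activities from the complete bipartite graph down to the target graph, re-estimating these hole weights at each stage via self-reducibility. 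Your phrase ``slowly adjust the weights in polynomially many stages'' is correct in spirit but glosses over this structure. None of this matters for the present paper, which only needs the statement.
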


Using the fact that cycle-factors correspond to perfect matchings in the bipartite graph $H$ constructed before, \cref{cor:algorithmsmallcyclefactordirected} is now an easy consequence.

\begin{proof}[Proof of \cref{cor:algorithmsmallcyclefactordirected}]
    Recall that cycle-factors of $G$ correspond to perfect matchings in the bipartite graph $H$ constructed before. By \cref{thm:algorithmalmostuniformperfectmatching} we can sample an almost-uniform random perfect matching of $H$ in polynomial time, and so we can sample an almost-uniform random cycle-factor $\sigma$ of $G$ in polynomial time.

    By \cref{thm:smallcyclefactordirected} and Markov's inequality, we know that a uniformly random cycle-factor of $G$ has at most $\cO((n\log d)/d)$ cycles with probability at least $1/2$. Since $\sigma$ is almost-uniform with some error $\delta$, this implies that $\sigma$ has at most $\cO((n\log d)/d)$ cycles with probability at least $1/2 - \delta$. If we now sample $\sigma$ independently at least $\Omega(\log n)$ times and return the cycle-factor with the least number of cycles, then the returned cycle-factor has at most $\cO((n\log d)/d)$ cycles with high probability.
\end{proof}

\section{Open problems}\label{sec:openproblems}

We believe that \cref{thm:smallcyclefactordirected} should hold with a constant factor of $1$. In fact, we conjecture that the following stability version of this result should hold.

\begin{conjecture}
    If $d$ divides $n$, then the expected number of cycles in a uniformly random cycle-factor of a directed $d$-regular graph on $n$ vertices is uniquely maximised by the disjoint union of $n/d$ complete directed graphs on $d$ vertices with a loop at every vertex.
\end{conjecture}

It is natural to ask what other properties of random permutations might generalise to directed regular graphs. As an example, Montmort's problem about the probability of a fixed point does not generalise. Indeed, in the complete directed bipartite graph between a set $A$ of $d$ vertices and a set $B$ of $d-1$ vertices with a loop at every vertex of $A$, every cycle-factor has exactly one fixed point, and so the probability that a random cycle-factor has a fixed point is $1$. In contrast, the probability that a random permutation has a fixed point is approximately $1-1/e$. However, it remains possible that the expected number of fixed points generalises.

\begin{problem}\label{prob:fixedpointscyclefactor}
    Is the expected number of fixed points in a uniformly random cycle-factor of a directed $d$-regular graph on $n$ vertices at most $\cO(n/d)$?
\end{problem}

Another property of random permutations is that every element is equally likely to be mapped to every other element. The corresponding generalisation to directed regular graphs would have answered \cref{prob:fixedpointscyclefactor} affirmatively. Unfortunately, this generalisation is completely false since there are directed $d$-regular graphs with an edge that is contained in a uniformly random cycle-factor with probability $1-o(1)$. This follows from a construction in \cite[Theorem 1.7]{abert2016matchings}. We note this generalisation turns out to be correct in the special case of dense and robustly expanding regular bipartite graphs \cite[Theorem 3.3]{granet2024random}.

Finally, it may be interesting to investigate to what degree the number of cycles in a random cycle-factor concentrates around its expectation. More generally, it would be interesting to find more statistics of the cycle structure of random permutations that transfer to random cycle-factors of directed regular graphs.

\textbf{Acknowledgements.} We would like to thank Thomas Karam for an informative lecture on Radhakrishnan's entropy proof of the Br\`{e}gman-Minc inequality.

\bibliography{bib}

\end{document}